\documentclass[a4paper,12pt]{article}
\usepackage{amsmath}
\usepackage{amssymb}
\usepackage{amsthm}
\usepackage{authblk}
\usepackage{setspace}
\usepackage[top=43truemm,bottom=43truemm,left=30.9truemm,right=30.9truemm]{geometry}
\usepackage{amsfonts}
\usepackage{amscd}
\numberwithin{equation}{section}
\newtheorem{mainresult}{Main}

\newtheorem{example}{Example}[section]

\newtheorem{theorem}[example]{Theorem}
\newtheorem{definition}[example]{Definition}
\newtheorem{lemma}[example]{Lemma}
\newtheorem{claim}{Claim}
\newtheorem{remark}[example]{Remark}
\newtheorem{question}[example]{Question}
\DeclareMathOperator{\spec}{Spec}%
\begin{document}
\setstretch{0.97}
\title{Counter-examples to 
non-noetherian \\ Elkik's approximation theorem}
\author{Kei Nakazato}
\date{}
\maketitle
\begin{abstract}
Elkik established a remarkable theorem that can be applied for any noetherian henselian ring. For algebraic equations with a formal solution (restricted by some smoothness assumption), this theorem provides a solution adically close to the formal one 
in the base ring. 
In this paper, we show that the theorem would fail for some non-noetherian henselian rings. These rings do not satisfy several conditions weaker than noetherianness, such as weak proregularity (due to Grothendieck et al.) of the defining ideal. We describe the resulting pathologies. 
\end{abstract}
\section{Introduction}
\label{intro}
The main goal of this paper is to show that \emph{Elkik's approximation theorem} (Theorem \ref{ElkikT}) would fail in some non-noetherian cases. 
Elkik's approximation theorem is used for giving affirmative answers to a fundamental question in M.\ Artin's celebrated work \cite{Art1}. We first recall it: 
\begin{question}
[cf.\ {\cite[Question 1.7]{Art1}}]
\label{qsart}
Let $(A, I)$ be a pair consisting of a ring $A$ and its ideal $I$. Set $\widehat{A}$ to be the $I$-adic completion of $A$. Let $f=(f_{1},\ldots, f_{m})$ be a polynomial system in $A[X_{1},\ldots, X_{N}]$, and suppose that the equation system $f=0$ (i.e.\ $f_{1}=0,\ldots, f_{m}=0$) has a solution 
$\widehat{{\boldsymbol{\alpha}}}=(\widehat{\alpha}_{1},\ldots,\widehat{\alpha}_{N})\in{\widehat{A}}^{N}$. 
Let $c$ be a positive integer. Does there exist a solution $\boldsymbol{\alpha}=(\alpha_{1},\ldots, \alpha_{N})\in A^{N}$ of $f=0$ such that 
$\alpha_{i}\equiv\widehat{\alpha}_{i}\mod I^{c}\widehat{A}\ (i=1,\ldots, N)$?
\end{question}
For an important class of henselian pairs, Artin proved that each pair has the following property
 (called the Artin approximation property): for every equation system, every solution, and every $c>0$, Question \ref{qsart} has an affirmative answer (\cite[Theorem 1.10]{Art1}). Artin's result has been generalized to a far stronger form below (cf.\ \cite{And}, \cite{Ogo}, \cite{P85}, \cite{P86}, \cite{P90}, and \cite{Swa}): if $(A, I)$ is noetherian\footnote{In \cite{Mor} and \cite{Sch}, it is shown that some interesting non-noetherian pairs have the Artin approximation property.} (i.e.\ $A$ is noetherian) and henselian, and the natural morphism $A\to \widehat{A}$ is regular, then $(A, I)$ has the Artin approximation property. 

In Elkik's theorem, the equations and the solutions are restricted by some smoothness assumption, which reflects a good lifting property of a henselian pair described in \cite[Lemme 2]{Elk}. By virtue of this, the theorem can be applied for a far broader class of henselian pairs, including all noetherian ones. We denote by $(*)$ the condition imposed on henselian pairs. For $(A, I)$, $(*)$ means either one of the following conditions (cf.\ \S\ref{section2.2}):
\begin{itemize}
\item[$\bold{({E}_{a})}$]{\emph{$I$ is principal, and $A$ has bounded $I$-torsion (i.e.\ there exists an integer $l>0$ such that $I^{l}A_{I\textnormal{-tor}}=(0)$, cf.\ \S\ref{tmpr})}};
\item[$\bold{({E}_{b})}$]{\emph{there exists a noetherian pair $(A_{0}, I_{0})$ such that $A$ is flat over $A_{0}$ and $I=I_{0}A$}. }
\end{itemize}

The basic purpose of this paper is to investigate what part of $(*)$ is essential for Elkik's theorem. The following is our main result, which particularly claims that the bounding condition on $A_{I\textnormal{-tor}}$ is crucial when $I$ is principal, but cannot be substituted for $(*)$. 
\begin{mainresult}
In Elkik's approximation theorem (Theorem \ref{ElkikT}), $(*)$ cannot be weakened to either one of the following conditions:
\begin{itemize}
\item[$\bold{({{E}'_{a}})}$]{$I$ is principal;}
\item[$\bold{({{E}''_{a}})}$]{$I$ is finitely generated, and $A$ has bounded $I$-torsion;}
\item[$\bold{({{E}'_{b}})}$]{there exists a pair $(A_{0}, I_{0})$ such that $A_{0}$ is noetherian outside $I_{0}$ {(}i.e.\ the scheme $\spec{A_{0}}\setminus V(I_{0})$ is noetherian{)}, $A$ is flat over $A_{0}$, and $I=I_{0}A$}. 
\end{itemize}
\end{mainresult}
We prove this statement by giving two examples in \S\ref{section3.1}. 
The construction is based on Greco and Salmon's example of non-flat $I$-adic completion (where $I=tA$). This example gives a negative answer to Question \ref{qsart} (Example \ref{GSKPR}), but it lacks the smoothness assumption that we require. To overcome this, we define an auxiliary equation, and impose suitable relations on $A$. These relations produce an element $\xi_{n}\in A$ for every integer $n\geq 1$, such that $t^{n}\xi_{n}\neq 0$ but $t^{n+1}\xi_{n}=0$ (and then $(*)$ is no longer satisfied). The relation $t^{n+1}\xi_{n}=0$ contributes to solving the additional equation in $\widehat{A}$, and the other $t^{n}\xi_{n}\neq 0$ is required to conclude the algebraic approximation fails. 

On the course of this study, it was found\footnote{Liran Shaul informed the author (cf.\ Acknowledgements). } that Elkik's theorem is related to \emph{weak proregularity}, a notion originated in Grothendieck's work\footnote{The term ``weakly proregular'' is due to \cite[Correction]{AJL}. } \cite{Gro}. For a pair $(A, I)$, weak proregularity of $I$ allows the derived functors of $I$-torsion and $I$-adic completion to behave well (see \cite{PSY} for details). We note that the weak proregularity condition is a generalization of $(*)$. Moreover, when $I$ is principal, they are equivalent. Thus the main result on $\bold{(E'_{a})}$ signifies the importance of weak proregurarity for Elkik's Theorem in this case. On the other hand, one of the henselian pairs given in \S\ref{section3.1} is defined by a non-principal ideal, which is not weakly proregular. These facts might say that weak proregularity is a key notion for Elkik's algebraic approximation. 

The organization of the paper is as follows. In \S\ref{tmpr}, we give
some terminology and preliminary results for later use. In \S\ref{counterexample}, we first recall Greco and Salmon's example, and then construct the principal examples to prove the main result. In \S\ref{subsecwkpr}, we check the statements on weak proregularity given in the preceding paragraph. 
\section{Terminology and preliminaries}
\label{tmpr}
Throughout this paper, all rings are assumed to be commutative with unit. 
By a \emph{pair} we mean a pair $(A, I)$ consisting of a ring $A$ and its ideal $I$. A 
\emph{morphism of pairs} $u:(A, I)\to (B, J)$ means a ring homomorphism $u:A\to B$ such that $u^{-1}(J)=I$. 

Let $A$ be a ring, and $a\in A$ an element. An element $x\in A$ is said to be \emph{$a$-torsion}, if there exists an integer $n>0$ such that $a^{n}x=0$. We denote by $A_{a\textnormal{-tor}}$ the ideal consisting of all $a$-torsion elements in $A$. For an ideal $I\subset A$, we denote by $A_{I\textnormal{-tor}}$ the ideal $\bigcap_{a\in I}A_{a\textnormal{-tor}}$. We say that $A$ has \emph{bounded $I$-torsion}, if there exists an integer $l>0$ such that $I^{l}A_{I\textnormal{-tor}}=(0)$ (or, equivalently, an ascending chain $\textnormal{Ann}_{A}({I})\subset\textnormal{Ann}_{A}({I}^{2})\subset
\textnormal{Ann}_{A}({I}^{3})\subset\cdots$ becomes stationary). 
\subsection{Henselian pairs}
Let us first recall some basic notions on \emph{henselian pairs}. 
There are a number of 
formulations of the definition of a henselian 
pair (cf.\ \cite[2.6.1]{KPR} or \cite[Chapitre XI, $\S${2}]{Ray}), and 
here we give the one adopted in Elkik's paper \cite{Elk}. 
\begin{definition}\normalfont 
We say that a pair $(A, I)$ is \emph{henselian} (or $A$ is \emph{$I$-adically henselian}), if $I$ is contained in the Jacobson radical of $A$, and for every \'{e}tale $A$-algebra $A'$ such that the induced homomorphism 
$A/I\to A'\otimes_{A}A/I\nonumber$ 
is an isomorphism, there exists an $A$-homomorphism $A'\to A$. 
\end{definition}
One can associate any pair with a universal construction of henselian pairs. 
\begin{theorem}[{\cite[6.1(i)]{Gre}}]
\label{henselization}
Let $(A, I)$ be a pair. Then there exists a henselian pair $(A^{h}, I^{h})$ together with a morphism $\phi:(A, I)\to (A^{h}, I^{h})$, which has the following universal property:
\begin{itemize}
\item{
for every henselian pair $(A', I')$ together with a morphism ${\psi}:(A, I)\to (A', I')$, there exists a unique morphism ${\psi}':(A^{h}, I^{h})\to (A', I')$ such that ${\psi}'\circ\phi=\psi$. }
\end{itemize}
\end{theorem}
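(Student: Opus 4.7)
The plan is to construct $(A^h, I^h)$ as a filtered colimit of \emph{\'etale neighborhoods} of $(A, I)$, following Grothendieck's construction of the strict henselization adapted to the pair setting.

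First I would introduce the category $\mathcal{N}$ whose objects are pairs $(B, \sigma)$ consisting of an \'etale $A$-algebra $B$ together with an $A$-algebra isomorphism $\sigma: A/I \xrightarrow{\sim} B/IB$, and whose morphisms are $A$-algebra homomorphisms $B\to B'$ compatible with the chosen $\sigma$'s. I would show that $\mathcal{N}$ is filtered: given $(B_1,\sigma_1)$ and $(B_2,\sigma_2)$, the tensor product $B_1\otimes_A B_2$ is \'etale over $A$, and reducing modulo $I$ the two isomorphisms $\sigma_1,\sigma_2$ glue to give a surjection $(B_1\otimes_A B_2)/I(B_1\otimes_A B_2)\to A/I$, whose kernel lifts to an idempotent (after localizing at a suitable finitely presented \'etale quotient, using standard structural results on \'etale morphisms) producing an object in $\mathcal{N}$ under which both $(B_i,\sigma_i)$ map. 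Two parallel morphisms $(B,\sigma)\rightrightarrows (B',\sigma')$ are coequalized by a further \'etale localization of $B'$, which remains in $\mathcal{N}$ because the two maps agree modulo $I$. I then set $A^h := \operatorname{colim}_{\mathcal{N}} B$ and $I^h := IA^h$; passing to the colimit of the isomorphisms $\sigma$ gives the crucial identification $A^h/I^h \cong A/I$.

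Next I would verify the henselian property. For $I^h \subseteq \operatorname{Jac}(A^h)$, I would take $x\in I^h$ and note that the ring $A^h[y]/(1-(1+x)y)$ is an \'etale $A^h$-algebra isomorphic to $A^h/I^h$ modulo $I^h$; descent through the colimit shows $1+x$ is a unit already in $A^h$. For the \'etale lifting condition, given an \'etale $A^h$-algebra $C$ with $A^h/I^h\to C/I^hC$ an isomorphism, finite presentation of \'etale morphisms lets one descend $C$ to an \'etale $B$-algebra $C_0$ for some $(B,\sigma)\in\mathcal{N}$; after further enlarging $B$ inside $\mathcal{N}$, I can ensure $B/IB\to C_0/IC_0$ is an isomorphism, so that $(C_0,\sigma_0)$ itself lies in $\mathcal{N}$. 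The structural colimit morphism $C_0\to A^h$ then base-changes to the desired $A^h$-algebra section $C = C_0\otimes_B A^h \to A^h$.

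Finally, for the universal property, given a morphism of pairs $\psi:(A,I)\to (A',I')$ with $(A',I')$ henselian, I would observe that for each $(B,\sigma)\in\mathcal{N}$ the base change $B\otimes_A A'$ is \'etale over $A'$ with $A'/I'\to (B\otimes_A A')/I'(B\otimes_A A')$ an isomorphism; the henselian hypothesis provides an $A'$-algebra section, hence a ring map $B\to A'$ over $\psi$. These maps are compatible across $\mathcal{N}$ (as such sections of \'etale neighborhoods into a henselian ring are unique — two sections would differ by an $A'$-automorphism of an \'etale cover trivial modulo $I'$, which henselianness forces to be the identity), assembling into a unique $\psi':(A^h,I^h)\to (A',I')$ with $\psi'\circ\phi=\psi$. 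The main obstacle I anticipate is verifying filteredness of $\mathcal{N}$ and the descent step in the henselian verification: both hinge on the technical input that \'etale morphisms which are isomorphisms modulo a Jacobson-radical ideal can be refined within the category of \'etale neighborhoods, and this is where the standard structure theorem for \'etale algebras (in terms of standard \'etale morphisms) has to be invoked carefully.
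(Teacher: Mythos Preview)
The paper does not give its own proof of this theorem: it is quoted verbatim from Greco \cite[6.1(i)]{Gre} and used as a black box, so there is nothing in the paper to compare your argument against. Your outline is the standard construction (essentially the one in \cite{Gre} and in Raynaud \cite[Chapitre~XI]{Ray}): realize $A^{h}$ as the filtered colimit of \'etale $A$-algebras $B$ for which $A/I\to B/IB$ is an isomorphism, and read off both the henselian property and the universal property from the colimit description.

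Two small comments on the details. First, in your filteredness step the passage through an idempotent is unnecessary: since each $\sigma_i:A/I\xrightarrow{\sim}B_i/IB_i$ is the structure isomorphism, one has directly
\[
(B_1\otimes_A B_2)/I(B_1\otimes_A B_2)\;\cong\;(B_1/IB_1)\otimes_{A/I}(B_2/IB_2)\;\cong\;A/I,
\]
so $B_1\otimes_A B_2$ already lies in $\mathcal{N}$. Second, for coequalizing two parallel maps $f,g:(B,\sigma)\rightrightarrows(B',\sigma')$, the cleanest justification is that the equalizer locus of $f$ and $g$ is open in $\spec B'$ (the diagonal of the unramified morphism $\spec B'\to\spec B$ is an open immersion) and contains $V(IB')$; restricting to this open produces the required object of $\mathcal{N}$, but one must still argue that an \emph{affine} \'etale neighborhood can be extracted, which is where the structure theorem for standard \'etale algebras genuinely enters. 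Apart from these points your sketch is correct and matches the approach of the cited reference.
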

The pair $(A^{h}, I^{h})$ (which is unique up to canonical isomorphisms) is called the \emph{henselization of $(A, I)$}, and $A^{h}$ is called the \emph{$I$-adic henselization of $A$}. 
We will use the following property of henselization. 
\begin{lemma}[{\cite[6.1(iii)]{Gre}}]
\label{henselproperty}Let $(A, I)$ be a pair, and $(A^{h}, I^{h})$ the henselization of 
$(A, I)$. Then $I^{h}=IA^{h}$, and the morphism $A\to A^{h}$ is flat. 
\end{lemma}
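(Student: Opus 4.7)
My plan is to establish both assertions from an explicit colimit construction of the henselization. Consider the filtered category $\Lambda$ whose objects are the étale $A$-algebras $A_{\lambda}$ for which the reduction map $A/I \to A_{\lambda}/IA_{\lambda}$ is an isomorphism (morphisms being $A$-algebra maps), and set $B := \varinjlim_{\lambda \in \Lambda} A_{\lambda}$. I would verify that $(B, IB)$ is a henselian pair and that the natural morphism $(A, I) \to (B, IB)$ satisfies the universal property stated in Theorem~\ref{henselization}; by the uniqueness there, $(B, IB)$ and $(A^h, I^h)$ are canonically isomorphic as pairs, which gives the equality $I^h = IA^h$ on the nose.

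The flatness assertion is then essentially formal: each $A_{\lambda}$ is étale over $A$, hence flat, and a filtered colimit of flat modules is flat. The containment $IB \subseteq \mathrm{Jac}(B)$ needed in the definition of henselian pair requires a short separate argument, using $B/IB \cong A/I$ together with the étale $B$-algebras $B[x]/(1 - xb)$ for $b \in IB$, to which the lifting property will then apply.

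To establish henselianness of $(B, IB)$, I would take an étale $B$-algebra $B'$ such that $B/IB \xrightarrow{\sim} B'/IB'$ and use that étale morphisms are of finite presentation: by a standard limit argument in the spirit of EGA~IV$_3$~\S8, $B'$ descends to an étale $A_{\lambda}$-algebra $A_{\lambda}'$ for some $\lambda$, and after enlarging $\lambda$ one may further arrange that $A_{\lambda}/IA_{\lambda} \to A_{\lambda}'/IA_{\lambda}'$ is already an isomorphism. Then $A_{\lambda}' \in \Lambda$, so the structural map $A_{\lambda}' \to B$ into the colimit furnishes the desired section $B' \to B$. The universal property is verified in the opposite direction: any morphism $(A, I) \to (A', I')$ to a henselian pair, base-changed to each $A_{\lambda}$, yields an étale $A'$-algebra $A' \otimes_A A_{\lambda}$ which is trivial modulo $I'$, and henselianness of $(A', I')$ produces a compatible system of sections $A_{\lambda} \to A'$ assembling into the required map $B \to A'$.

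The main obstacle is the descent step that presents an arbitrary étale $B$-algebra as arising from some $A_{\lambda}'$ while simultaneously arranging the mod-$I$ triviality at a finite stage; this is where finite presentation and the filtered nature of $\Lambda$ are both essential, and it is the only genuinely nontrivial ingredient, the remaining checks being formal manipulations with the colimit.
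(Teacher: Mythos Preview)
The paper does not prove this lemma: it is quoted from \cite[6.1(iii)]{Gre} and used as a black box. Your outline reconstructs the standard filtered-colimit construction of the henselization (which is in essence Greco's), and from it both conclusions---$I^{h}=IA^{h}$ and flatness of $A\to A^{h}$---follow exactly as you indicate.

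One slip to fix: in the Jacobson-radical step, the algebra $B[x]/(1-xb)$ you propose is the localization $B[1/b]$, and for $b\in IB$ this reduces modulo $IB$ to the \emph{zero} ring, not to $B/IB$, so the lifting hypothesis is not met. What you want is, for each $c\in B$, the localization $B[1/(1-cb)]$: it is \'etale over $B$, and since $1-cb\equiv 1\pmod{IB}$ its reduction is $B/IB$; the section produced by the lifting property then forces $1-cb$ to be a unit in $B$, whence $b\in\mathrm{Jac}(B)$. It would also be worth recording why $\Lambda$ is genuinely filtered---coequalizing two parallel $A$-algebra maps $A_{\lambda}\rightrightarrows A_{\mu}$ uses that the diagonal of an \'etale (hence unramified) morphism is an open immersion, together with the fact that the two maps already agree modulo $I$---since your flatness conclusion rests on the colimit being filtered.
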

\subsection{Elkik's approximation theorem}
\label{section2.2}
Here we review Elkik's approximation theorem. 
We begin by defining the condition $(*)$ on a pair, which is imposed on a henselian pair in Elkik's theorem. 
\begin{definition}\normalfont
We say that a pair $(A, I)$ satisfies $(*)$, if it satisfies either $\bold{({E}_{a})}$ or $\bold{({E}_{b})}$ 
(these two conditions are given in \S\ref{intro}). 
\end{definition}
Note that $(*)$ is preserved under henselization (see Remark \ref{ebea} below) like noetherianness (cf.\ \cite[6.9(i)]{Gre}). 
\begin{remark}\normalfont
\label{ebea}
If $(A, I)$ satisfies $(*)$, then so does $(B, IB)$ for every flat $A$-algebra $B$ (in particular, so does $(A^{h}, I^{h})$ by Lemma \ref{henselproperty}). To check this, it suffices to consider the case where $(A, I)$ satisfies $\bold{(E_{a})}$. 
In this case, the ideal $IB\subset B$ is principal. Moreover, since $B$ is flat over $A$, it holds that 
$\textnormal{Ann}_{B}(I^{k}B)=(\textnormal{Ann}_{A}({I}^{k}))B$ 
for every $k\geq 1$ (\cite[Theorem 7.4(iii)]{Mat}). Therefore an ascending chain 
$\textnormal{Ann}_{B}(IB)\subset\textnormal{Ann}_{B}({I}^{2}B)\subset
\textnormal{Ann}_{B}({I}^{3}B)\subset\cdots$ becomes stationary. Thus, $(B, IB)$ also satisfies $\bold{(E_{a})}$. 
\end{remark}
We exhibit below some examples of pairs which satisfy $(*)$. 
\begin{example}\normalfont
Clearly noetherian pairs satisfy $(*)$. We list other examples. 
\begin{itemize}
\item[(1)]{Let $V$ be a valuation ring with a non-zero element $t\in V$ which is not invertible. If the value group of $V$ is not isomorphic to $\mathbb{Z}$, then $V$ is not noetherian (\cite[Theorem 11.1]{Mat}). Assume that $V$ is $t$-adically separated. Then, for every finitely generated $V$-algebra $A$, the pair $(A, tA)$ satisfies $\bold{({E}_{a})}$ (\cite[Theorem 7.4.1(1)]{FGK}). Next, assume that $V$ is $t$-adically complete. Let $V\langle X_{1},\ldots, X_{n}\rangle$ be the $t$-adic completion of a polynomial ring $V[X_{1},\ldots, X_{n}]$. Then, for every finitely generated $V\langle X_{1},\ldots, X_{n}\rangle$-algebra $B$, $(B, tB)$ satisfies 
$\bold{({E}_{a})}$ (\cite[Theorem 7.3.2]{FGK}). }
\item[(2)]{Let $R$ be a complete regular local ring of mixed characteristic $p>0$. Let $x_{1},\ldots, x_{d}$ be a regular system of parameters of $R$. Put $R_{n}=R[x_{1}^{1/p^{n}},\ldots, x_{d}^{1/p^{n}}]$ for $n\geq 1$ where $\{x_{i}^{1/p^{n}}\}_{n\geq 1}$ is a system of $p$-power roots of $x_{i}$ such that $({x_{i}^{1/p^{n+1}}})^{p}=x_{i}^{1/p^{n}}$ $(i=1,\ldots, d)$. Let $R_{\infty}$ be the inductive limit of $\{R_{n}\}_{n\geq 1}$. Then, for an ideal $I\subset R$, the pair $(R_{\infty}, IR_{\infty})$ satisfies $\bold{({E}_{b})}$ (this class of rings play an important role in recent works on the homological conjectures, cf.\ \cite{YAn} or \cite{Shi}). }
\end{itemize}
By considering the henselizations, one can also obtain examples of non-noetherian henselian pairs which satisfy $(*)$ (cf.\ \cite[6.9(ii)]{Gre}). 
\end{example}
Now we state the assertion of Elkik's approximation theorem. 
\begin{theorem}[{\cite[Th\'{e}or\`{e}me 2 \emph{bis} and p.587]{Elk}}]
\label{ElkikT}
Let $(A, I)$ be a henselian pair, and suppose that $(A, I)$ satisfies $(*)$. Let $\widehat{A}$ be the $I$-adic completion of $A$, 
$B$ a finitely presented $A$-algebra, and $\overline{B}=B\otimes_{A}\widehat{A}$. Let $V\subset \spec{B}$ be an open set which is smooth over 
$\spec{A}$, and $\overline{V}\subset \spec{\overline{B}}$ the preimage of $V$. 

Then for every integer $n\geq 0$ and every $\widehat{A}$-section 
$\overline{\varepsilon}:\spec{\widehat{A}}\to\spec{\overline{B}}$ 
whose restriction to $\spec{\widehat{A}}\setminus V(I\widehat{A})$ factors through $\overline{V}$, 
there exists an $A$-section ${\varepsilon}:\spec{A}\to\spec{B}$ congruent to $\overline{\varepsilon}$ modulo $I^{n}$, whose restriction to $\spec{A}\setminus V(I)$ factors through $V$. 
\end{theorem}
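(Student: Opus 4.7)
The plan is to follow Elkik's original strategy from \cite{Elk}: reduce to a standard-smooth presentation, produce a crude lift of $\widehat{\boldsymbol{\alpha}}$ to $A$, and then iterate Newton-style corrections that converge because the pair is henselian. The condition $(*)$ should enter only at the Newton step, to make sense of ``dividing by a power of the Jacobian'' in the absence of noetherianness.

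First, by shrinking $V$ and adjusting the presentation, I would reduce to the case $B=A[X_{1},\dots,X_{N}]/(f_{1},\dots,f_{m})$ with $V=\spec{B}\cap D(\Delta)$, where $\Delta$ is an $r\times r$ minor of the Jacobian of $(f_{1},\dots,f_{r})$ and the remaining $f_{r+1},\dots,f_{m}$ lie in the ideal $(f_{1},\dots,f_{r})$ after inverting $\Delta$, so that $V$ is standard smooth of relative dimension $N-r$. The section $\overline{\varepsilon}$ gives $\widehat{\boldsymbol{\alpha}}\in\widehat{A}^{N}$ with $f_{i}(\widehat{\boldsymbol{\alpha}})=0$, and the hypothesis that the restriction to $\spec{\widehat{A}}\setminus V(I\widehat{A})$ factors through $\overline{V}$ says $\Delta(\widehat{\boldsymbol{\alpha}})$ is invertible modulo any $I$-torsion-free localization. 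Choosing an arbitrary set-theoretic lift $\boldsymbol{\alpha}_{0}\in A^{N}$ of $\widehat{\boldsymbol{\alpha}}\bmod I^{M}\widehat{A}=A/I^{M}$ for $M$ much larger than $n$ and the Jacobian exponent we will need, I obtain a candidate with $f_{i}(\boldsymbol{\alpha}_{0})\in I^{M}$ and $\Delta(\boldsymbol{\alpha}_{0})\equiv\Delta(\widehat{\boldsymbol{\alpha}})\pmod{I^{M}}$.

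Second, I would invoke the key technical input of \cite[Lemme 2]{Elk}: in a henselian pair $(A,I)$, if $\boldsymbol{\alpha}_{0}\in A^{N}$ satisfies $f_{i}(\boldsymbol{\alpha}_{0})\in\Delta(\boldsymbol{\alpha}_{0})^{2s+1}I^{k}$ for $k$ suitably large, then there exists an exact solution $\boldsymbol{\alpha}$ with $\boldsymbol{\alpha}\equiv\boldsymbol{\alpha}_{0}\pmod{\Delta(\boldsymbol{\alpha}_{0})^{s}I^{k-s}}$, and $\Delta(\boldsymbol{\alpha})$ is invertible wherever $\Delta(\boldsymbol{\alpha}_{0})$ is. The correction is built by a convergent Newton-type iteration whose solvability at each step is exactly the henselian lifting property for \'{e}tale morphisms. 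Applied to $\boldsymbol{\alpha}_{0}$ from the previous paragraph, this yields $\boldsymbol{\alpha}\in A^{N}$ defining an $A$-section $\varepsilon$ congruent to $\overline{\varepsilon}$ modulo $I^{n}$ and whose restriction to $\spec{A}\setminus V(I)$ factors through $V$, as required.

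The main obstacle is justifying the Newton step in the non-noetherian setting. Classically one invokes Artin--Rees to identify $I$-adic closures with finite intersections of ideals; this is unavailable here and must be replaced by $(*)$. In case $\mathbf{(E_{a})}$, bounded $I$-torsion supplies a uniform $l$ with $I^{l}A_{I\text{-tor}}=(0)$, which kills the annihilator ambiguity created when one multiplies by a power of the single generator of $I$ and then cancels it after the Newton correction. In case $\mathbf{(E_{b})}$, one descends the data through the flat morphism $A_{0}\to A$: the classical Elkik theorem applies over the noetherian pair $(A_{0},I_{0})$, and flatness lets the resulting section be base-changed back to $A$ without creating new torsion obstructions. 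The counter-examples of \S\ref{section3.1} will confirm that removing $(*)$ breaks precisely this Newton step, so no argument bypassing it can succeed.
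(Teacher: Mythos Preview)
The paper does not give a proof of Theorem~\ref{ElkikT} at all: it is quoted from \cite[Th\'{e}or\`{e}me~2~\emph{bis} and p.~587]{Elk} and used as background for the counter-examples in \S\ref{counterexample}. So there is no ``paper's own proof'' to compare your proposal against; at best one can ask whether your sketch is a faithful outline of Elkik's original argument.

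As an outline of Elkik's proof, your first two paragraphs are broadly on target: reduce to a standard-smooth presentation with a distinguished Jacobian minor $\Delta$, take a crude lift $\boldsymbol{\alpha}_{0}$ modulo a high power of $I$, and then apply the henselian Newton lemma \cite[Lemme~2]{Elk}. However, the justification you give for the two branches of $(*)$ in the third paragraph is not right, and this is precisely the delicate point. The Newton lemma requires the approximate solution to satisfy $f(\boldsymbol{\alpha}_{0})\in \Delta(\boldsymbol{\alpha}_{0})^{2}\cdot H$ with $H$ contained in a high power of $I$; all you know a priori is $f(\boldsymbol{\alpha}_{0})\in I^{M}$ together with the set-theoretic inclusion $V(\Delta(\boldsymbol{\alpha}_{0}))\subset V(I)$. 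Passing from the latter to an honest containment $I^{M}\subset \Delta(\boldsymbol{\alpha}_{0})^{2}I^{k}$ (for $M$ large relative to $k$) is an Artin--Rees-type statement, and it is \emph{this} step---not ``cancelling torsion after a Newton correction''---where Elkik invokes $(*)$; see the discussion on \cite[p.~587]{Elk}.

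Your treatment of $\mathbf{(E_{b})}$ is in fact incorrect. You propose to ``descend the data through the flat morphism $A_{0}\to A$'' and apply the noetherian theorem over $(A_{0},I_{0})$, but the algebra $B$, the open set $V$, and the section $\overline{\varepsilon}$ live over $A$ and $\widehat{A}$, not over $A_{0}$; there is no reason they come from $A_{0}$, and flatness alone does not let you push them down. What $\mathbf{(E_{b})}$ actually buys in Elkik's argument is a uniform Artin--Rees bound: since $I=I_{0}A$ with $A_{0}$ noetherian and $A$ flat over $A_{0}$, the relevant colon ideals $(I^{M}:\Delta^{2})$ can be controlled by the corresponding colon ideals in $A_{0}$ via \cite[Theorem~7.4(iii)]{Mat}, exactly as in the paper's Remark~\ref{ebea} and the paragraph preceding Claim~\ref{nwkpr}. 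Similarly, in case $\mathbf{(E_{a})}$ the bounded-torsion hypothesis is used to get a uniform exponent $h$ with $I^{h}\subset (\Delta(\boldsymbol{\alpha}_{0}))$ up to controlled torsion, not merely to ``kill annihilator ambiguity'' after the fact. Your sketch has the right architecture but misidentifies both where $(*)$ enters and what it does there.
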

This theorem provides affirmative answers to Question \ref{qsart} in certain situations. Indeed, the $A$-algebra $B$ in the theorem corresponds to a polynomial system $f=(f_{1},\ldots, f_{m})$ over $A$ such that $B=A[X_{1},\ldots, X_{N}]/(f_{1},\ldots, f_{m})$. The $\widehat{A}$-section $\overline{\varepsilon}$ corresponds to a solution $\widehat{{\boldsymbol{\alpha}}}\in{\widehat{A}}^{N}$ of $f=0$ which defines $\overline{\varepsilon}$. In terms of explicit equations, the theorem claims that there exists a solution ${\boldsymbol{\alpha}}\in{A}^{N}$ of $f=0$ such that 
${\boldsymbol{\alpha}}\equiv\widehat{\boldsymbol{\alpha}}\mod I^{n}\widehat{A}^{N}$ (and the corresponding $A$-section $\varepsilon$ satisfies an additional condition). 
\section{Counter-examples}
\label{counterexample}
Here we describe the principal examples to prove the main result. Before that, let us recall a known example of a henselian pair which provides a negative answer for Question \ref{qsart}. 
All these examples are based on the following key fact:
\emph{the $I$-adic henselization of $A$ is always flat over $A$, but the $I$-adic completion is not so}.
\subsection{Greco and Salmon's example of non-flat completion}
The example is the following. 
\begin{example}[{\cite[pp.24-25]{GS} \textnormal{and} \cite[pp.137-138]{KPR}}]\normalfont
\label{GSKPR}
Let $k$ be a field. We consider a polynomial ring in countably many indeterminates $k[x_{0}, x_{1}, x_{2},\ldots][y]$, and its ideal
\begin{eqnarray}
{{\mathfrak{a}}}=(x_{0}y, x_{0}-x_{1}y, x_{1}-x_{2}y, x_{2}-x_{3}y,\ldots)\ . \nonumber
\end{eqnarray}
Let us denote the quotient $k[y, x_{0}, x_{1},\ldots]/{\mathfrak{a}}$ by $R$. 
Consider the polynomial ring $R[t]$. Let $A$ be the $t$-adic henselization of $R[t]$, and $\widehat{A}$ the $t$-adic completion of $A$. Then, for the linear equation $(t-y)X=0$, 
we can find the solution $\widehat{\alpha}=\sum^{\infty}_{i=0}x_{i}t^{i}$ in $\widehat{A}$. However, $R[t]$ does not have any non-zero solution of this equation, nor does $A$ because of the flatness. Hence this example gives a negative answer to Question \ref{qsart} (and \emph{$\widehat{A}$ is not flat over $A$}). 
\end{example}
Note that $(A, tA)$ satisfies $\bold{(E_{a})}$, and thus this example does not work well as a counter-example to Theorem \ref{ElkikT}. 
The reason is that against the hypothesis the restriction of 
$\overline{\varepsilon}$ to $\spec{\widehat{A}}\setminus V(t\widehat{A})$ cannot factor through the preimage of ${V}\subset \spec{B}$, an open set which is \emph{smooth} over $\spec{A}$. The additional equations (in Example \ref{principalex} and \ref{boundedtorex}) are defined to overcome this difficulty. 
\subsection{Proof of the main result}
\label{section3.1}
First we prove the main result on $\bold{(E'_{a})}$ and $\bold{({{E}'_{b}})}$. 
\begin{example}
\label{principalex}
Let $k$ and $R$ be the same as in Example \ref{GSKPR}. 
Consider the polynomial ring $R[t]$ and its ideal 
\begin{eqnarray}
{\mathfrak{n}}=(x_{0}t^{2}, x_{1}t^{3}, x_{2}t^{4},\ldots)\ .\nonumber
\end{eqnarray}
Let $A_{0}$ be the quotient $R[t]/{\mathfrak{n}}$, $A$ the $t$-adic henselization of $A_{0}$, and $\widehat{A}$ the $t$-adic completion of $A$. Consider the polynomial system $f=(f_{1}, f_{2})$ where
\begin{eqnarray}
\begin{cases}
f_{1}=(t-y)X\\
f_{2}=t^{2}X\label{eqsys} 
\end{cases}
\nonumber
\end{eqnarray}
in $A[X]$ and the solution $\widehat{\alpha}=\sum^{\infty}_{i=0}x_{i}t^{i}\in \widehat{A}$ of $f=0$. Let $B$ be the $A$-algebra $A[X]/(f_{1}, f_{2})$, 
$\overline{B}=B\otimes_{A}\widehat{A}$, $V=\spec{B}\setminus V(tB)$, and $\overline{\varepsilon}:\spec{\widehat{A}}\to\spec{\overline{B}}$ the $\widehat{A}$-section corresponding to $\widehat{\alpha}$. Then this example provides a proof of the main result on $\bold{({E'_{a}})}$ and $\bold{({E'_{b}})}$. 
\end{example}
Let us verify it. Since $A_{0}[\frac{1}{t}]$ is a finitely generated $k$-algebra, $A_{0}$ is noetherian outside $tA_{0}$. Thus, by Lemma \ref{henselproperty}, $(A, tA)$ is a henselian pair which satisfies $\bold{(E'_{a})}$ and $\bold{({E'_{b}})}$. 
By virtue of the auxiliary polynomial $f_{2}$, $V=\spec{B}\setminus V(tB)$ is \'{e}tale over $\spec{A}$. Therefore 
$B$, $V$, and $\overline{\varepsilon}$ clearly satisfy the conditions assumed in Theorem \ref{ElkikT}. Thus it suffices to show that any solution of $f_{1}=0$ in $A$ is not congruent to $\widehat{\alpha}$ modulo $t\widehat{A}$. Since $A$ is flat over $A_{0}$, we can make use of the following argument about $A_{0}$. 
Let ${\mathfrak{c}}\subset A_{0}$ be the kernel of the $A_{0}$-endomorphism 
\begin{eqnarray}
A_{0}\to A_{0},\ \xi\mapsto (t-y)\xi\ . \nonumber
\end{eqnarray}
The following claim is essential in our proof.  
\begin{claim}\label{essentialclaim}
${\mathfrak{c}}\subset tA_{0}$. 
\end{claim}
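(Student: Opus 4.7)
The plan is to exploit the natural $\mathbb{N}$-grading on $R[t]$ by the degree of $t$, with $R$ placed in degree zero. Since $\mathfrak{n}=(x_0t^2,x_1t^3,x_2t^4,\ldots)$ is generated by $t$-homogeneous elements, it is a homogeneous ideal, and an elementary computation identifies its degree-$j$ component as $(\mathfrak{n})_j=(x_0,x_1,\ldots,x_{j-2})R$ for $j\geq 2$ and zero for $j\in\{0,1\}$. Consequently every class in $A_0$ acquires well-defined graded components $a_j\in R/(\mathfrak{n})_j$; in particular the constant term $a_0\in R$ is unambiguous, and the condition $\xi\in tA_0$ is equivalent to $a_0=0$ in $R$.

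Given $\xi\in\mathfrak{c}$, I would lift it to a polynomial $\xi=\sum_{j=0}^{N}a_jt^j\in R[t]$ of some $t$-degree $N$ and equate graded pieces of the relation $(t-y)\xi\in\mathfrak{n}$. The components in degrees zero and one force $ya_0=0$ and $a_0=ya_1$ in $R$, while in each degree $j\geq 2$ one obtains the recursion $a_{j-1}-ya_j\in(x_0,\ldots,x_{j-2})R$.

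The remaining ingredient is the nilpotency pattern $y^{k}x_i=0$ for $k>i$, which follows inductively from the defining relations $x_0y=0$ and $x_{i+1}y=x_i$ of $\mathfrak{a}$: iterating gives $y^ix_i=x_0$ and hence $y^{i+1}x_i=0$. Multiplying the degree-$j$ recursion by $y^{j-1}$ then annihilates the right-hand side (since every generator $x_i$ appearing there has $i\leq j-2<j-1$) and yields $y^{j-1}a_{j-1}=y^ja_j$ in $R$ for all $j\geq 2$. Combined with $a_0=ya_1$, telescoping produces $a_0=y^{N+1}a_{N+1}=0$, the last term vanishing because $\xi$ has $t$-degree at most $N$.

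The main point that deserves care is the identification of the graded components $(\mathfrak{n})_j$ (and with it the clean criterion $\xi\in tA_0\Leftrightarrow a_0=0$), since this is what converts the containment $(t-y)\xi\in\mathfrak{n}$ into the coefficient-by-coefficient recursion above. Once this setup and the nilpotency of the $x_i$ under powers of $y$ are in place, the telescoping argument is purely formal.
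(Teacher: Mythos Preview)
Your argument is correct and genuinely different from the paper's route. Both proofs use the $t$-grading on $A_0$ to reduce to showing that the degree-zero coefficient vanishes, and both extract the same coefficient recursion $a_{j-1}-ya_j\in(x_0,\ldots,x_{j-2})R$ from $(t-y)\xi\in\mathfrak{n}$. From there the paper proceeds structurally: it first establishes a $k$-basis of $R$ (Claim~\ref{bclaim1}) and the annihilators $\textnormal{Ann}_R(t^i)$ (Claim~\ref{annclaim2}), then tracks inductively the $k$-coefficient $d_0$ of $x_i$ inside each $c_i$, finally using the top-degree equation $c_Nt^{N+1}=0$ to force $d_0=0$. Your approach bypasses the basis computation entirely: you exploit only the relations $y^{i+1}x_i=0$, multiply the degree-$j$ relation by $y^{j-1}$ to kill the right-hand side, and telescope $a_0=ya_1=y^2a_2=\cdots=y^Na_N=0$. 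This is shorter and needs less preliminary structure (no Claim~\ref{bclaim1}); the paper's version, on the other hand, yields the finer information $c_i\in kx_0+\cdots+kx_i$ along the way, which is closer in spirit to the annihilator description reused in Example~\ref{boundedtorex}.
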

We assume Claim \ref{essentialclaim} at this moment and deduce that the algebraic approximation fails. Let $\beta\in A$ be a solution of $f_{1}=0$. Since $A$ is flat over $A_{0}$, $\beta$ belongs to ${\mathfrak{c}}A$. By Claim \ref{essentialclaim}, it belongs to $tA$, too. However, $x_{0}\notin tA_{0}$, and thus 
$\sum^{\infty}_{i=0}x_{i}t^{i}\notin t\widehat{A}$. Consequently $\beta$ is not congruent to $\widehat{\alpha}$ modulo $t\widehat{A}$, as required. 

Thus we are reduced to showing Claim \ref{essentialclaim}. As a preparation for this, 
we analyze the structures of $R$ and $A_{0}$. 
\begin{claim}
\label{bclaim1}
As a $k$-vector space, $R$ has the basis $\{x_{0}, x_{1}, x_{2},\ldots\}\cup\{1, y, y^{2},\ldots\}$. 
\end{claim}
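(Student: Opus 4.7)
The plan is to prove the claim by establishing spanning and linear independence of the proposed set separately. Spanning will follow from a direct reduction in $R$ using the defining relations, and linear independence will follow from constructing an explicit $k$-algebra on the free module with the stated basis and producing a retraction of the spanning map.

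For the spanning part, I would first extract consequences of the relations. Iterating $x_{i}=x_{i+1}y$ gives $x_{i}=x_{i+k}y^{k}$ for every $k\geq 0$. Combined with $x_{0}y=0$ this yields $x_{i}y^{i+1}=x_{0}y=0$. Now for any product $x_{i}x_{j}$ with $i\leq j$, rewriting $x_{j}=x_{j+i+1}y^{i+1}$ gives $x_{i}x_{j}=x_{j+i+1}\cdot(x_{i}y^{i+1})=0$. Similarly, $x_{i}y^{m}$ equals $x_{i-m}$ for $m\leq i$ and vanishes for $m>i$. Every monomial in $y$ and the $x_{i}$'s therefore reduces either to some $y^{m}$, to some $x_{k}$, or to zero, so the proposed set spans $R$ as a $k$-vector space.

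For linear independence, I would construct a model $S$ as the free $k$-module on the symbols $y^{m}$ ($m\geq 0$) and $x_{i}$ ($i\geq 0$), equipped with the bilinear product $y^{a}\cdot y^{b}=y^{a+b}$, $y^{a}\cdot x_{i}=x_{i-a}$ when $i\geq a$ (and zero otherwise), and $x_{i}\cdot x_{j}=0$. A routine case analysis shows that $S$ is a commutative, associative $k$-algebra with unit $y^{0}$. The surjection $k[y,x_{0},x_{1},\ldots]\twoheadrightarrow S$ sending each generator to its eponymous basis element kills every generator of $\mathfrak{a}$, so it factors through a $k$-algebra map $\psi:R\to S$. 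Let $W$ denote the free $k$-module on the proposed basis; composing the natural $k$-linear spanning map $W\to R$ with $\psi$ recovers the identity on $W$, so this spanning map is injective, as required.

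The only mildly delicate step is the associativity verification for $S$ — in particular the triple product $(y^{a}x_{i})y^{b}$ versus $y^{a}(x_{i}y^{b})$, where several vanishing conditions must be shown to coincide. This reduces to the easy observation that both expressions equal $x_{i-a-b}$ when $i\geq a+b$ and vanish otherwise, and the remaining cases involving two or three $x_{i}$'s are trivial because the whole product is zero. All other steps are short calculations.
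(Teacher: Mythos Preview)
Your proof is correct. The spanning argument is essentially the same as the paper's (the paper compresses it to the single line $x_{i}x_{j}=x_{i+j+1}y^{j+1}x_{j}=0$, but the underlying manipulations are identical).

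For linear independence the two arguments genuinely diverge. The paper stays inside $R$: given a dependence relation $\sum e_{-i-1}y^{i}+\sum e_{i}x_{i}=0$, it multiplies by $x_{n}$ to eliminate the $y$-part, then by $y^{n}$ to isolate the top coefficient, reducing everything to the single elementary check that $x_{0}\notin\mathfrak{a}$ in the polynomial ring. Your argument instead builds an explicit $k$-algebra $S$ on the free module with the proposed basis, verifies that the defining relations of $R$ hold in $S$, and uses the resulting retraction $R\to S$ to conclude injectivity of $W\to R$. The paper's route is shorter and avoids the associativity case-check, but yours has the advantage of exhibiting $R$ concretely (indeed it shows $R\cong S$), which makes later facts such as the annihilator of $y$ in $R$ immediate. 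Both are standard and sound.
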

\begin{claim}
\label{annclaim2}
Let $i$ be a positive integer. 
The annihilator of $t^{i}\in A_{0}$ in $R$ is 
\begin{eqnarray}
\textnormal{Ann}_{R}(t^{i})=
\begin{cases}
\label{ann}
\ \ \ \ \ \ \ \mspace{3mu}(0)&(i=1)\\
kx_{0}+\cdots+kx_{i-2}&(i\geq 2)\ . 
\end{cases}\nonumber
\end{eqnarray}
\end{claim}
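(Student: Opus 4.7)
My plan is to compute $\textnormal{Ann}_{R}(t^{i})$ by exploiting the grading of $R[t]$ with respect to $t$-degree and then by analyzing the multiplicative structure of $R$ provided by Claim \ref{bclaim1}.

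First, I would observe that $\mathfrak{n}\subset R[t]$ is generated by the elements $x_{j}t^{j+2}$, each of which is homogeneous in the $t$-grading on $R[t]$ (with $R$ placed in degree $0$). Hence $\mathfrak{n}$ is a graded ideal, and its degree-$i$ component equals
\begin{equation*}
\mathfrak{n}_{i}\;=\;\sum_{j+2\leq i}R\cdot x_{j}t^{i}\;=\;(x_{0},\ldots,x_{i-2})R\cdot t^{i}
\end{equation*}
for $i\geq 2$, and is zero for $i\leq 1$. Since $r\in R$ satisfies $rt^{i}=0$ in $A_{0}$ exactly when $rt^{i}\in\mathfrak{n}$, and since $rt^{i}$ is already homogeneous of $t$-degree $i$, this gives the identification $\textnormal{Ann}_{R}(t^{i})=(x_{0},\ldots,x_{i-2})R$. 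In particular $\textnormal{Ann}_{R}(t)=(0)$, which settles the case $i=1$.

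It remains to check that $(x_{0},\ldots,x_{i-2})R$ coincides with the $k$-subspace $kx_{0}+\cdots+kx_{i-2}$ when $i\geq 2$. Here I would use Claim \ref{bclaim1} and the defining relations of $R$. The relations $x_{0}y=0$ and $x_{j}=x_{j+1}y$ for $j\geq 0$ give inductively $y^{m}x_{l}=x_{l-m}$ for $m\leq l$ and $y^{m}x_{l}=0$ for $m>l$. Combining these to compute $x_{j}x_{l}$ by rewriting $x_{j}=x_{j+l}y^{l}$ and using $x_{0}y=0$, one obtains $x_{j}x_{l}=0$ for all $j,l\geq 0$. Then, expressing an arbitrary $r\in R$ in the basis of Claim \ref{bclaim1} as $r=\sum_{m\geq 0}c_{m}y^{m}+\sum_{j\geq 0}d_{j}x_{j}$ and multiplying by $x_{l}$ yields $r\cdot x_{l}=\sum_{m=0}^{l}c_{m}x_{l-m}\in kx_{0}+\cdots+kx_{l}$. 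The reverse inclusion is immediate since each $x_{m}$ with $m\leq l$ equals $y^{l-m}\cdot x_{l}\in R\cdot x_{l}$. Summing over $l=0,\ldots,i-2$ gives the desired equality.

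The routine part is the multiplicative computation in $R$ once Claim \ref{bclaim1} is in hand. The single delicate point is the passage from ``$rt^{i}$ lies in $\mathfrak{n}$'' to ``$r$ lies in $(x_{0},\ldots,x_{i-2})R$'', which is where I expect the main obstacle to lie; I would handle it cleanly by appealing to the fact that $\mathfrak{n}$ is homogeneous in the $t$-grading, so that the degree-$i$ part of any expression $rt^{i}=\sum_{j}p_{j}(t)\,x_{j}t^{j+2}$ can be extracted by a coefficient projection, forcing $r=\sum_{j\leq i-2}r_{j}x_{j}$ with $r_{j}\in R$.
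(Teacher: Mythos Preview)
Your proof is correct and follows essentially the same route as the paper: use the $t$-grading to identify $\mathfrak{n}\cap S_{i}$ with $(x_{0},\ldots,x_{i-2})R\cdot t^{i}$, and then reduce $(x_{0},\ldots,x_{i-2})R$ to $kx_{0}+\cdots+kx_{i-2}$ via the relation $x_{j}x_{l}=0$. The only difference is cosmetic: the paper has already recorded $x_{j}x_{l}=0$ as equation~(\ref{xxrelation}) in the proof of Claim~\ref{bclaim1}, so your re-derivation of it (and of $y^{m}x_{l}=x_{l-m}$) is redundant but harmless.
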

\begin{proof}[Proof of Claim\ \ref{bclaim1}]
For non-negative integers $i, j$, it holds that 
\begin{eqnarray}
\label{xxrelation}
x_{i}x_{j}=x_{i+j+1}y^{j+1}x_{j}=0
\end{eqnarray}
in $R$. Hence $R$ is generated by $\{x_{0}, x_{1}, x_{2},\ldots\}\cup\{1, y, y^{2},\ldots\}$ as a $k$-vector space. 
We show that they are linearly independent. 
Let $n\geq 0$ be an integer and assume that 
\begin{eqnarray}\label{cl1}
\sum^{n}_{i=0}e_{-i-1}y^{i}+\sum^{n}_{i=0}e_{i}x_{i}=0 
\end{eqnarray}
where $e_{i}\in k$ $(-n-1\leq i \leq n)$. 
Multiplying both sides of (\ref{cl1}) by $x_{n}$, we get 
$\sum^{n}_{i=0}e_{i-n-1}x_{i}=0$. Thus the linear independence follows from the statement 
\begin{eqnarray}
\label{LI}
\sum^{n}_{i=0}e_{i}x_{i}=0\ \Rightarrow \ e_{n}=0\ . 
\end{eqnarray}
Let us prove (\ref{LI}). Since $y^{n}(\sum^{n}_{i=0}e_{i}x_{i})=e_{n}x_{0}$, it suffices to check that the monomial $x_{0}\in k[x_{0}, x_{1}, x_{2},\ldots][y]$ does not belong to 
${\mathfrak{a}}$. This is easy. 
\end{proof}
\begin{proof}[Proof of Claim\ \ref{annclaim2}]
Note that $R[t]$ has a grading $R[t]=\bigoplus_{d\in\mathbb{N}}S_{d}$, where $\mathbb{N}$ denotes the semigroup of non-negative integers and $S_{d}\subset R[t]$ consists of all monomials of degree $d$. 
For an element $a\in R$, $a\in\textnormal{Ann}_{R}(t^{i})$ means that $at^{i}\in {\mathfrak{n}}\cap S_{i}$ in $R[t]$. Thus, if $i=1$, clearly $a=0$. Suppose that $i\geq 2$. Then 
\begin{eqnarray}
{\mathfrak{n}}\cap S_{i}&=&S_{i-2}x_{0}t^{2}+\cdots+S_{0}x_{i-2}t^{i}\nonumber\\
&=&Rx_{0}t^{i}+\cdots+Rx_{i-2}t^{i}\ .\nonumber
\end{eqnarray}
Hence $a\in\textnormal{Ann}_{R}(t^{i})$ if and only if $a\in Rx_{0}+\cdots+Rx_{i-2}$. The latter ideal is equal to $kx_{0}+\cdots+kx_{i-2}$ by (\ref{xxrelation}), as claimed. 
\end{proof}
We then start to prove Claim \ref{essentialclaim}. 
\begin{proof}[Proof of Claim \ref{essentialclaim}]
Take an element 
$\gamma\in {\mathfrak{c}}$. Then there exist an integer $N\geq 2$ and $c_{0}, c_{1},\ldots, c_{N}\in R$ such that $\gamma=\sum^{N}_{i=0}c_{i}t^{i}$ (a way of choosing $\{c_{i}\}$ is not unique, but it is permissible to choose them arbitrarily). 
Our goal in this proof is to show that $c_{0}=0$. 
Multiplying $\gamma$ by $(t-y)$, we obtain
\begin{eqnarray}
-c_{0}y+\sum^{N-1}_{i=0}(c_{i}-c_{i+1}y)t^{i+1}+c_{N}t^{N+1}=0\ . \label{gamma0}
\end{eqnarray}
Recall that $R[t]$ is a graded ring as in the proof of Claim \ref{annclaim2}. Since ${\mathfrak{n}}\subset R[t]$ is a homogeneous ideal, the quotient $A_{0}$ is also graded by degree of $t$. Therefore we can decompose (\ref{gamma0}) into the following equations about $c_{i}$:
\begin{eqnarray}
&\ &c_{0}y=0\ , \label{2.1}\\
&\ &(c_{i}-c_{i+1}y)t^{i+1}=0\ \ \ \ (i=0, 1,\ldots, N-1)\ , \label{2.2}\\
&\ &c_{N}t^{N+1}=0\label{2.3}\ .  
\end{eqnarray}
From Claim \ref{bclaim1} and (\ref{2.1}), it is easily seen that there exists $d_{0}\in k$ which satisfies $c_{0}=d_{0}x_{0}$. Thus it suffices to verify $d_{0}=0$. Then it is enough to prove that $c_{N}$ admits the expression
\begin{eqnarray}
c_{N}=\sum^{N-1}_{j=0}e^{(N-1)}_{j}x_{j}+d_{0}x_{N}\ , \label{nexp}
\end{eqnarray}
where $e^{(N-1)}_{j}\in k\ (j=0,\ldots, N-1)$. Indeed, by substituting the right-hand side of (\ref{nexp}) for $c_{N}$ in (\ref{2.3}), we first obtain $d_{0}x_{N}t^{N+1}=0$. 
Then $d_{0}x_{N}\in kx_{0}+\cdots +kx_{N-1}$ by Claim \ref{annclaim2}. Thus Claim \ref{bclaim1} implies $d_{0}=0$. 

Now we show (\ref{nexp}) by induction. 
Let us start with $c_{1}$. By the equation with $i=0$ in (\ref{2.2}), $(c_{0}-c_{1}y)t=0$. 
Thus, 
\begin{eqnarray}
&\ \ &(d_{0}x_{1}-c_{1})yt=0\ \ \ (\textnormal{since}\ c_{0}=d_{0}x_{0})\nonumber\\
&\Rightarrow&(d_{0}x_{1}-c_{1})y=0\ \ \ \ \ \ \mspace{1mu}(\textnormal{by\ Claim\ \ref{annclaim2}})\nonumber\\
&\Rightarrow&d_{0}x_{1}-c_{1}\in kx_{0}\ \ \ \ \ \  \ \mspace{2mu}(\textnormal{by\ Claim\ \ref{bclaim1}})\ . \nonumber
\end{eqnarray}
Hence there exists $e^{(0)}_{0}\in k$ such that
\begin{eqnarray}
c_{1}=e^{(0)}_{0}x_{0}+d_{0}x_{1}\ .\label{2.5}
\end{eqnarray}
Secondly, we compute $c_{2}$ by (\ref{2.5}). By the equation with $i=1$ in (\ref{2.2}), $(c_{1}-c_{2}y)t^{2}=0$. Thus, 
\begin{eqnarray}
&\ \ &(e^{(0)}_{0}x_{1}+d_{0}x_{2}-c_{2})yt^{2}=0\ \ \ \ \ \ \ \ \ \ (\textnormal{by}\ (\ref{2.5}))\nonumber\\
&\Rightarrow&(e^{(0)}_{0}x_{1}+d_{0}x_{2}-c_{2})y\in kx_{0} \ \ \ \ \ \ \ \mspace{3mu}(\textnormal{by\ Claim\ \ref{annclaim2}})\nonumber\\
&\Rightarrow&e^{(0)}_{0}x_{1}+d_{0}x_{2}-c_{2}\in kx_{0}+kx_{1} \ \ \ (\textnormal{by\ Claim\ \ref{bclaim1}})\ . \nonumber
\nonumber
\end{eqnarray}
Hence there exist $e^{(1)}_{0}, e^{(1)}_{1}\in k$ such that 
\begin{eqnarray}
c_{2}=e^{(1)}_{0}x_{0}+e^{(1)}_{1}x_{1}+d_{0}x_{2}\ . \nonumber
\end{eqnarray}
For each $i=1,\dots, N-1$, the procedure for computing $c_{i+1}$ from the value of $c_{i}$ is similar to the above. 
As a consequence, we obtain (\ref{nexp}). 
\end{proof}
\begin{remark}\normalfont
For integers $n\geq m\geq 2$, we set $f^{(n)}_{2}\in A[X]$ to be the polynomial $f^{(n)}_{2}=t^{n}X$, and ${\mathfrak{n}^{(m)}}\subset R[t]$ the ideal ${\mathfrak{n}^{(m)}}=(x_{0}t^{m}, x_{1}t^{m+1}, x_{2}t^{m+2},\ldots)$. 
It is possible to replace $(f_{2}, {\mathfrak{n}})$ in Example \ref{principalex} with $(f^{(n)}_{2}, {\mathfrak{n}}^{(m)})$. 
Thus there exist infinitely many counter-examples of this type, and Example \ref{principalex} represents them. 
\end{remark}
Next we give a proof of the main result on $\bold{({{E}''_{a}})}$. 
\begin{example}
\label{boundedtorex}
Let $k$ and $R$ be the same as in Example \ref{GSKPR}. Consider the polynomial ring $R[t, u]$ and its ideals 
\begin{eqnarray}
\nonumber
\begin{cases}
{\mathfrak{n}}=(x_{0}t^{2}, x_{1}t^{3}, x_{2}t^{4}, \ldots)\\
{\mathfrak{n}'}=(x_{0}u, x_{1}tu, x_{2}t^{2}u, x_{3}t^{3}u,\ldots)\ .
\end{cases}
\end{eqnarray}
Put $A'_{0}=R[t, u]/(\mathfrak{n}+{\mathfrak{n}}')$ and $I_{0}=(t, u)A'_{0}$. 
Let $A'$ be the $I_{0}$-adic henselization of $A'_{0}$, $I\subset A'$ the ideal $I_{0}A'$, and $\widehat{A'}$ the 
$I$-adic completion of $A'$. 
Consider the polynomial system $f'=(f_{1}, f_{2}, f_{3})$ where
\begin{eqnarray}
\begin{cases}
f_{1}=(t-y)X\\
f_{2}=t^{2}X\\
f_{3}=uX
\nonumber
\end{cases}
\end{eqnarray}
in $A'[X]$ and the solution $\widehat{\alpha}=\sum^{\infty}_{i=0}x_{i}t^{i}\in \widehat{A'}$ of $f'=0$. Let $B'$ be the $A'$-algebra $A'[X]/(f_{1},f_{2},f_{3})$, 
$\overline{B'}=B'\otimes_{A'}\widehat{A'}$, $V'=\spec{B'}\setminus V(IB')$, and $\overline{{\varepsilon}'}:\spec{\widehat{A'}}\to \spec{\overline{B'}}$ the $\widehat{A'}$-section corresponding to $\widehat{\alpha}$. Then this example provides a proof of the main result on $\bold{({E''_{a}})}$. 
\end{example}
The verification is similar to that of Example \ref{principalex}. To demonstrate the failure of the approximation, 
it suffices to show the following statement which corresponds to Claim \ref{essentialclaim}. 
\begin{claim}
\label{stcorrc3}
$\{\gamma\in A'_{0}\ |\ f_{1}(\gamma)=0\}\subset I_{0}$. 
\end{claim}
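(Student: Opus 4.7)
The plan is to reduce Claim \ref{stcorrc3} to the already-established Claim \ref{essentialclaim} by exploiting a $u$-grading on $R[t,u]$. Equip $R[t,u]$ with the $\mathbb{N}$-grading in which $u$ has degree $1$ and every element of $R[t]$ has degree $0$. The generators $x_i t^{i+2}$ of $\mathfrak{n}$ are concentrated in $u$-degree $0$, while the generators $x_i t^i u$ of $\mathfrak{n}'$ are all homogeneous of $u$-degree $1$; consequently $\mathfrak{n}+\mathfrak{n}'$ is a homogeneous ideal for this grading, and its $u^0$-component equals $\mathfrak{n}$.

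Given $\gamma \in A'_0$ with $(t-y)\gamma=0$, I would choose a lift of $\gamma$ to $R[t,u]$ and decompose it as $\gamma=\sum_{j\geq0}\gamma_j u^j$ with $\gamma_j\in R[t]$. Because $t-y$ has $u$-degree zero, the inclusion $(t-y)\gamma\in\mathfrak{n}+\mathfrak{n}'$ projects on the $u^0$-component to $(t-y)\gamma_0\in\mathfrak{n}$. Thus the class of $\gamma_0$ in $A_0=R[t]/\mathfrak{n}$ lies in the ideal $\mathfrak{c}$ introduced for Example \ref{principalex}, and Claim \ref{essentialclaim} yields $\gamma_0\in tA_0$, i.e.\ $\gamma_0\in tR[t]+\mathfrak{n}$. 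Since $\mathfrak{n}\subset(t^2)\subset(t)$ in $R[t]$, this simplifies to $\gamma_0\in tR[t]$.

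Combining $\gamma_0\in tR[t]$ with $\sum_{j\geq1}\gamma_j u^j\in uR[t,u]$ gives $\gamma\in(t,u)R[t,u]$, so its image in $A'_0$ lies in $I_0=(t,u)A'_0$, as required. The only real point to verify is that the $u$-grading cleanly separates $\mathfrak{n}$ from $\mathfrak{n}'$, after which the delicate combinatorial analysis behind Claim \ref{essentialclaim} is inherited for free. I expect this clean separation — rather than any new computation — to be the main step, since the whole purpose of introducing $\mathfrak{n}'$ (and the variable $u$) is to destroy the bounded torsion hypothesis without interfering with the obstruction produced by $\mathfrak{n}$.
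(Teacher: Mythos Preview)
Your argument is correct and follows essentially the same strategy as the paper: isolate the $u$-degree-zero part of $\gamma$ and apply the analysis behind Claim~\ref{essentialclaim}. The only organizational difference is that the paper works with the finer $\mathbb{N}^{2}$-grading by $(t,u)$-bidegree, recomputes the annihilators $\textnormal{Ann}_{R}(t^{i}u^{j})$ in $A'_{0}$ (these are reused in Claim~\ref{btcl5}), and then reruns the inductive computation from the proof of Claim~\ref{essentialclaim} on the coefficients $c'_{i}=c_{i,0}$; you instead observe that the $u^{0}$-component of $\mathfrak{n}+\mathfrak{n}'$ is exactly the ideal $\mathfrak{n}$ of $R[t]$, so the projection lands directly in $A_{0}$ and Claim~\ref{essentialclaim} applies as a black box. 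Your route is a bit more economical for this claim alone, while the paper's bidegree setup and annihilator table serve double duty for the bounded-torsion verification that follows.
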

\begin{proof}[Proof of Claim\ \ref{stcorrc3}]
We regard $A'_{0}$ as a graded ring, as follows. For $(i, j)\in \mathbb{N}^{2}$, set 
$S_{(i,j)}=\{at^{i}u^{j}\in R[t,u]\ |\ a\in R\}$. 
Then we have the graded decomposition $R[t,u]=\bigoplus_{(i,j)\in\mathbb{N}^{2}}S_{(i,j)}$. 
Since the ideal $\mathfrak{n}+{\mathfrak{n}}'\subset R[t,u]$ is homogeneous, the quotient $A'_{0}$ is also $\mathbb{N}^{2}$-graded. Moreover, by calculating $({\mathfrak{n}}+{\mathfrak{n}'})\cap S_{(i,j)}$, we can verify that the annihilator of $t^{i}u^{j}\in A'_{0}$ in $R$ is
\begin{eqnarray}
\textnormal{Ann}_{R}(t^{i}u^{j})=
\begin{cases}
\label{ann2}
\ \ \ \ \ \ \ \ (0)&(i=1,\ j=0)\\
kx_{0}+\cdots+kx_{i-2}&(i\geq 2,\ j=0)\\
kx_{0}+\cdots+kx_{i}&(i\geq 0,\ j\geq 1)
\end{cases}
\end{eqnarray}
(cf.\ the proof of Claim \ref{annclaim2}). 

Take an element $\gamma\in A'_{0}$, and set ${\mathbb{N}}^{2}_{\leq{n}}=\{(i, j)\in \mathbb{N}^{2}\ |\ i\leq n,\ j\leq n\}$ for $n\in \mathbb{N}$. Then $\gamma$ can be expressed as 
$\gamma=\sum_{({i,j)}\in{\mathbb{N}^{2}_{\leq{N}}}}c_{i,j}t^{i}u^{j}$ 
where $N\geq 2$ and $c_{i,j}\in R$. Put $c'_{i}=c_{i,0}$. 
If $f_{1}(\gamma)=0$, from the grading of $A'_{0}$, we can find the following equations about $c'_{i}$:
\begin{eqnarray}
&\ &c'_{0}y=0\ , \nonumber\\
&\ &(c'_{i}-c'_{i+1}y)t^{i+1}=0\ \ \ \ (i=0, 1,\ldots, N-1)\ , \nonumber\\
&\ &c'_{N}t^{N+1}=0\nonumber\ .  
\end{eqnarray}
Then by the latter argument in the proof of Claim \ref{essentialclaim}, we obtain $c'_{0}=0$. Hence 
$\gamma\in I_{0}$. 
\end{proof}
Lastly, let us check that $(A', I)$ satisfies $\bold{({{E''_{a}}})}$. 
\begin{claim}
\label{btcl5}
$A'$ has bounded $I$-torsion. 
\end{claim}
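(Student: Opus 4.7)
The plan is to first reduce to a statement about $A'_0$ by flatness of the henselization, and then establish the bound $l = 2$ directly via the $\mathbb{N}^2$-grading and the annihilator formula (\ref{ann2}).

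First I would note that $A'$ is flat over $A'_0$ with $I = I_0 A'$ by Lemma \ref{henselproperty}, and that $I_0^k$ is finitely generated for every $k \geq 1$. Hence the flat-base-change formula $\textnormal{Ann}_{A'}(I^k) = \textnormal{Ann}_{A'_0}(I_0^k) \cdot A'$ (\cite[Theorem 7.4(iii)]{Mat}), used exactly as in Remark \ref{ebea}, shows that the ascending chain $\{\textnormal{Ann}_{A'}(I^k)\}_{k \geq 1}$ stabilizes if and only if the analogous chain for $A'_0$ does. So it suffices to prove that $A'_0$ has bounded $I_0$-torsion.

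Next I would prove that in fact $I_0^2 \cdot (A'_0)_{I_0\textnormal{-tor}} = (0)$. Since $\mathfrak{n} + \mathfrak{n}'$ is homogeneous, the $\mathbb{N}^2$-grading on $R[t,u]$ descends to $A'_0$, and an element is $I_0$-torsion (resp.\ killed by $I_0^2$) if and only if each of its $\mathbb{N}^2$-homogeneous components is. So I may reduce to a homogeneous element $\xi = a t^i u^j$ with $a \in R$, and assume $\xi$ is $I_0$-torsion. Applying the torsion condition at the pair $(s, r) = (0, N)$ for some $N \geq 1$ forces $a \in \textnormal{Ann}_R(t^i u^{j+N})$, which by (\ref{ann2}) (applied in the branch $j + N \geq 1$) equals $kx_0 + \cdots + kx_i$ regardless of $(i, j)$.

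Finally, I would verify that for any $a \in kx_0 + \cdots + kx_i$ and any $(s, r)$ with $s + r = 2$, the product $a \cdot t^{i+s} u^{j+r}$ vanishes in $A'_0$. By (\ref{ann2}) the three annihilators $\textnormal{Ann}_R(t^{i+2} u^j)$, $\textnormal{Ann}_R(t^{i+1} u^{j+1})$, and $\textnormal{Ann}_R(t^i u^{j+2})$ all contain $kx_0 + \cdots + kx_i$: when $j = 0$ one uses $\textnormal{Ann}_R(t^{i+2}) = kx_0 + \cdots + kx_i$ (valid since $i + 2 \geq 2$), and every other instance falls under the $j \geq 1$ branch of (\ref{ann2}), giving an annihilator of the form $kx_0 + \cdots + kx_{i'}$ with $i' \geq i$. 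The main obstacle is the bookkeeping across the edge cases $i = 0, 1$ when $j = 0$, but no idea beyond (\ref{ann2}) and the grading is needed.
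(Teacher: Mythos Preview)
Your proposal is correct and follows essentially the same route as the paper: use the $\mathbb{N}^2$-grading of $A'_0$ and the $u$-torsion condition together with (\ref{ann2}) to force each coefficient into $kx_0+\cdots+kx_i$, check that such coefficients are killed by $I_0^2$, and transfer the bound to $A'$ via flatness of the henselization and \cite[Theorem 7.4(iii)]{Mat}. The only cosmetic differences are that the paper places the flatness step last and observes the slightly sharper fact that already $t^2$ and $u$ (not just $t^2,tu,u^2$) annihilate each homogeneous piece; also, your ``if and only if'' in the reduction step is more than you need or justify, but only the ``if'' direction is used.
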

\begin{proof}[Proof of Claim\ \ref{btcl5}]
Take an element $\gamma\in (A'_{0})_{I_{0}\textnormal{-tor}}$. Then it can be expressed as 
$\gamma=\sum_{({i,j)}\in\mathbb{N}^{2}_{\leq{N}}}c_{i,j}t^{i}u^{j}$, as in the proof of Claim \ref{stcorrc3}. Since $\gamma$ is $u$-torsion, there exists an integer $l>0$ such that $c_{i,j}t^{i}u^{j+l}=0$ for every $(i,j)\in\mathbb{N}^{2}_{\leq{N}}$ (by the grading of $A'_{0}$). Thus $c_{i,j}\in kx_{0}+\cdots+kx_{i}$ by (\ref{ann2}). Therefore $c_{i,j}t^{i}u^{j}$ is annihilated by $t^{2}$ and $u$. 
Thus, ${I_{0}}^{2}(A'_{0})_{I_{0}\textnormal{-tor}}=(0)$. Here $A'$ is flat over $A'_{0}$, and therefore we can apply \cite[Theorem 7.4(i$\mspace{-1mu}$i$\mspace{-1mu}$i)]{Mat} as in Remark \ref{ebea}. Consequently we obtain $I^{2}A'_{I\textnormal{-tor}}=(0)$, as claimed. 
\end{proof}
\section{Remarks on weak proregularity}
\label{subsecwkpr}
Here we give several remarks on weak proregularity and the examples in \S\ref{section3.1}. We refer \cite{PSY} and \cite{Szl} for basic properties of weak proregularity, and use the following notation. 
Let $A$ be a ring, $\underline{a}=a_{1},\ldots, a_{n}$ a sequence of elements in $A$, and $M$ an $A$-module. $K(\underline{a})$ denotes the Koszul complex of $\underline{a}$ (cf.\ \cite[\S{4.5}]{Wei}), and we set $K(\underline{a};M)$ to be the chain complex $K(\underline{a})\otimes_{A} M$. Let $H_{k}(\underline{a})$ (resp.\ $H_{k}(\underline{a};M)$) denote the $k^{\textnormal{th}}$ homology group of $K(\underline{a})$ (resp.\ $K(\underline{a};M)$). For an integer $i>0$, put ${\underline{a}^{i}}=a^{i}_{1},\ldots, a^{i}_{n}$. 

For $a\in A$ and positive integers $i<j$, 
we define a morphism of complexes $K(a^{j})\to K(a^{i})$ which is identity in degree $0$, and multiplication by $a^{j-i}$ in degree $1$. It induces a morphism of complexes $K(\underline{a}^{j})\to K(\underline{a}^{i})$ (resp.\ $K(\underline{a}^{j};M)\to K(\underline{a}^{i};M)$) and a homomorphism $H_{k}(\underline{a}^{j})\to H_{k}(\underline{a}^{i})$ (resp.\ $H_{k}(\underline{a}^{j};M)\to H_{k}(\underline{a}^{i};M)$) in the natural way, such that $\{K(\underline{a}^{i})\}_{i}$ and $\{H_{k}(\underline{a}^{i})\}_{i}$ (resp.\ $\{K(\underline{a}^{i};M)\}_{i}$ and $\{H_{k}(\underline{a}^{i};M)\}_{i}$) form inverse systems. 
\begin{definition}
\label{wkprrdf}
\normalfont
We say that a sequence $\underline{a}=a_{1},\ldots, a_{n}$ in $A$ is \emph{weakly proregular}, if the 
inverse system $\{H_{k}(\underline{a}^{i})\}_{i}$ is pro-zero (i.e.\ for every integer $n>0$, there exists an integer $m>n$ such that the homomorphism $H_{k}(\underline{a}^{m})\to H_{k}(\underline{a}^{n})$ is the zero map) for every $k>0$. An ideal $I\subset A$ is said to be \emph{weakly proregular}, if it is generated by a weakly proregular sequence. 
\end{definition}
Note that for finitely generated ideals $I, J\subset A$ defining the same adic topology, $I$ is weakly proregular if and only if $J$ is so (\cite[Corollary 6.4]{PSY}). 
\begin{remark}
\label{wkprrmk}
\normalfont
For a principal ideal $I=tA$, $I$ is weakly proregular if and only if $A$ has bounded $I$-torsion. Indeed, if $I$ is weakly proregular, there exists an integer $l>0$ such that $t^{l-1}\textnormal{Ann}_{A}(t^{l})=(0)$. Then $\textnormal{Ann}_{A}(t^{l+n+1})=\textnormal{Ann}_{A}(t^{l+n})$ for every integer $n\geq 0$ (otherwise, $\textnormal{Ann}_{A}(t^{l+n+1})\setminus\textnormal{Ann}_{A}(t^{l+n})$ would have an element $x$ for some $n$, such that $t^{n+1}x\in \textnormal{Ann}_{A}(t^{l})$ and $t^{l-1}(t^{n+1}x)\neq 0$). Hence $A$ has bounded $I$-torsion. The converse is clear. Moreover, in this case, $A$ has bounded $I$-torsion if and only if $(A, I)$ satisfies $(*)$ (it can be checked by \cite[Theorem 7.4(iii)]{Mat} like Remark \ref{ebea}). 
\end{remark}
Any ideal of a noetherian ring is weakly proregular (\cite[Theorem 4.34]{PSY}). More generally, for a pair $(A, I)$ which satisfies $(*)$, $I$ is weakly proregular. 
Indeed, $\bold{(E_{a})}$ implies the weak proregularity as remarked above. If $(A, I)$ satisfies $\bold{(E_{b})}$, there exist a noetherian ring $A_{0}$ and elements $t_{1},\ldots, t_{r}\in A_{0}$ such that $A$ is flat over $A_{0}$ and $I=(t_{1},\ldots, t_{r})A$. 
Set $t_{0}=0\ (\in A_{0})$. For every integer $n>0$ and each $i=0,\ldots,r-1$, an ascending chain of ideals 
\begin{eqnarray}
[(t^{n}_{0},\ldots, t^{n}_{i})A:t_{i+1}]\subset[(t^{n}_{0},\ldots, t^{n}_{i})A:t^{2}_{i+1}]\subset[(t^{n}_{0},\ldots, t^{n}_{i})A:t^{3}_{i+1}]\subset\cdots\nonumber
\end{eqnarray}
stabilizes by noetherianness of $A_{0}$ and flatness of $A_{0}\to A$ (\cite[Theorem 7.4(i$\mspace{-1mu}$i$\mspace{-1mu}$i)]{Mat}). Thus there exists $m>n$ such that $[(t^{m}_{0},\ldots, t^{m}_{i})A:t^{m}_{i+1}]\subset [(t^{n}_{0},\ldots, t^{n}_{i})A:t^{m-n}_{i+1}]$ 
(i.e.\ the sequence $t_{1},\ldots, t_{r}$ is \emph{proregular}, see \cite[Definition 2.6]{Szl}). Hence $I$ is weakly proregular by \cite[Lemma 2.7]{Szl}. 

On the other hand, in all our counter-examples, the defining ideals cannot be weakly proregular because of the failure of the approximation. It is clear from Remark \ref{wkprrmk} that the ideal $tA$ in Example \ref{principalex} is not weakly proregular. For Example \ref{boundedtorex}, the statement can be checked as follows. 
\begin{claim}
\label{nwkpr}
In Example \ref{boundedtorex}, the ideal $I\subset A'$ is not weakly proregular. 
\end{claim}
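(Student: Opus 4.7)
The plan is to show that the sequence $\underline{a}=t, u$ generating $I$ is not weakly proregular in the sense of Definition \ref{wkprrdf}; since weak proregularity of a finitely generated ideal depends only on the ideal (by \cite[Corollary 6.4]{PSY}), this implies that $I$ itself is not weakly proregular. I would take $k=1$ and $n=1$ in the definition of pro-zero, and for every integer $m\geq 2$ exhibit an element of $H_{1}(\underline{a}^{m}; A')$ whose image under the transition map $H_{1}(\underline{a}^{m}; A')\to H_{1}(\underline{a}; A')$ is nonzero. This forces $\{H_{1}(\underline{a}^{i}; A')\}_{i}$ to be not pro-zero.

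The witness I would use is the class $\eta_{m}:=[(x_{m-2}, 0)]\in H_{1}(\underline{a}^{m}; A')$. That $(x_{m-2}, 0)$ is a cycle follows from (\ref{ann2}): since $x_{m-2}\in \textnormal{Ann}_{R}(t^{m})$, we have $x_{m-2}t^{m}+0\cdot u^{m}=0$. The degree-1 component of the transition map sends $(p, q)\mapsto (pt^{m-1}, qu^{m-1})$, so $\eta_{m}$ maps to the class of $(x_{m-2}t^{m-1}, 0)$ in $H_{1}(\underline{a}; A')$. It therefore suffices to show that $x_{m-2}t^{m-1}\notin uA'$: this automatically implies $x_{m-2}t^{m-1}\neq 0$ (since $0\in uA'$) and that $(x_{m-2}t^{m-1}, 0)$ is not a boundary, as a boundary $(uc, -tc)$ in $K(\underline{a}; A')$ would force $x_{m-2}t^{m-1}\in uA'$.

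For this nonvanishing I would combine the standard isomorphism $A'_{0}/I_{0}^{N}\cong A'/I^{N}$ (valid for every $N\geq 1$, since henselization preserves $I$-adic quotients) with the $\mathbb{N}^{2}$-grading on $A'_{0}$ already used in the proof of Claim \ref{stcorrc3}. In $A'_{0}$ the monomial $x_{m-2}t^{m-1}$ is homogeneous of bidegree $(m-1, 0)$, and its bidegree-$(m-1, 0)$ component is nonzero because $x_{m-2}\notin\textnormal{Ann}_{R}(t^{m-1})=kx_{0}+\cdots+kx_{m-3}$ by (\ref{ann2}) and Claim \ref{bclaim1}. Pick any $N>m-1$. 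Then $I_{0}^{N}$ is contained in the sum of bidegree components $(i, j)$ with $i+j\geq N$, and so has trivial bidegree-$(m-1, 0)$ part; likewise any element of $uA'_{0}$ has trivial bidegree-$(m-1, 0)$ part because multiplication by $u$ raises the $u$-degree by one. Consequently, if $x_{m-2}t^{m-1}=uy$ for some $y\in A'$, reducing modulo $I^{N}$ and using a lift $z\in A'_{0}$ of the image of $y$ in $A'/I^{N}\cong A'_{0}/I_{0}^{N}$ gives $x_{m-2}t^{m-1}-uz\in I_{0}^{N}A'_{0}$, which is a contradiction at bidegree $(m-1, 0)$. Hence $x_{m-2}t^{m-1}\notin uA'$, completing the argument.

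The main obstacle I expect is this transfer between the $\mathbb{N}^{2}$-graded ring $A'_{0}$ and its henselization $A'$, which is not itself graded. The isomorphism $A'_{0}/I_{0}^{N}\cong A'/I^{N}$ is the essential bridge: it reduces all required nonvanishing assertions about $A'$ to elementary bidegree bookkeeping in $A'_{0}/I_{0}^{N}$. Once this is set up, the remainder of the proof is routine.
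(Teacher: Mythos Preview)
Your argument is correct, but it differs substantially from the paper's.

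The paper argues by contradiction through the short exact sequence
\[
0\to H_{0}(u^{i};H_{1}(t^{i}))\to H_{1}(t^{i},u^{i})\to H_{1}(u^{i};H_{0}(t^{i}))\to 0
\]
(the Koszul filtration), so that pro-vanishing of $\{H_{1}(t^{i},u^{i})\}_{i}$ forces pro-vanishing of $\{H_{0}(u^{i};H_{1}(t^{i}))\}_{i}$. This yields, for $\nu\gg 0$, an equation $t^{\nu-2}x_{\nu-2}=u^{2}z_{\nu}$ with $z_{\nu}\in\textnormal{Ann}_{A'}(t^{2})$; since $z_{\nu}$ is then $I$-torsion, Claim~\ref{btcl5} gives $u^{2}z_{\nu}=0$, so $t^{\nu-2}x_{\nu-2}=0$ in $A'$ for all large $\nu$, and the formal solution $\widehat{\alpha}$ would actually lie in $A'$, contradicting Claim~\ref{stcorrc3}. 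In other words, the paper deduces non--weak-proregularity \emph{from} the failure of approximation and the bounded-torsion result already established.

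Your route is direct and more elementary: you produce explicit Koszul $1$-cycles $(x_{m-2},0)$ and check by bidegree bookkeeping in $A'_{0}$, transported through $A'_{0}/I_{0}^{N}\cong A'/I^{N}$, that their images in $H_{1}(t,u;A')$ are nonzero. This avoids the exact sequence, the bounded-torsion claim, and any reference to $\widehat{\alpha}$; it is entirely self-contained once (\ref{ann2}) and Claim~\ref{bclaim1} are in hand. The trade-off is that the paper's proof makes visible the conceptual link the author emphasizes in \S\ref{subsecwkpr}---that the very obstruction to Elkik approximation is what breaks weak proregularity---whereas your proof, while cleaner, does not display that connection.
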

\begin{proof}
Assume that $I=(t, u)A'$ is weakly proregular. Then, the sequence $t, u$ (in $A'$) is weakly proregular (\cite[Corollary 6.3]{PSY}). Note that for $i<j$ the morphisms $K(t^{j})\to K(t^{i})$ and $K(u^{j})\to K(u^{i})$ induce the following commutative diagram with exact rows (cf.\ \cite[p.167]{Szl} and \cite[Lemma 4.5.3]{Wei}):
\begin{eqnarray}
\begin{CD}\nonumber
0 @>>> H_{0}(u^{j};H_{1}(t^{j})) @>>> H_{1}(t^{j}, u^{j}) @>>> H_{1}(u^{j};H_{0}(t^{j})) @>>> 0\\
@. @VVV @VVV @VVV @. \\
0 @>>> H_{0}(u^{i};H_{1}(t^{i})) @>>> H_{1}(t^{i}, u^{i}) @>>> H_{1}(u^{i};H_{0}(t^{i})) @>>> 0 @. \ .
\end{CD}
\end{eqnarray}
Here by assumption $\{H_{1}(t^{i}, u^{i})\}_{i}$ is pro-zero, and thus
so is the inverse system $\{H_{0}(u^{i};H_{1}(t^{i}))\}_{i}$. Hence there exists an integer $l>2$ such that 
for every integer $\nu\geq l$ the 
transition map from $H_{0}(u^{\nu};H_{1}(t^{\nu}))$ to $H_{0}(u^{2};H_{1}(t^{2}))$ is the zero map. Since $x_{\nu-2}\in H_{1}(t^{\nu})$, there exists $z_{\nu}\in H_{1}(t^{2})$ such that $t^{\nu-2}x_{\nu-2}=u^{2}z_{\nu}$. 
Here $t^{\nu-2}x_{\nu-2}$ is $u$-torsion, and therefore so is $z_{\nu}$. Hence $z_{\nu}$ is $I$-torsion. Thus $t^{\nu-2}x_{\nu-2}=u^{2}z_{\nu}=0$ by the proof of Claim \ref{btcl5}. Then the solution $\widehat{\alpha}=\sum^{\infty}_{i=0}x_{i}t^{i}\in\widehat{A}'$ must come from $A'$. This is absurd. 
\end{proof}
\section*{Acknowledgements}
The author would like to thank Professor 
Kazuhiro Fujiwara for careful advice. He is also grateful to Professor Ryo Takahashi for useful comments. His special appreciation goes to Dr.\ Liran Shaul for pointing out that the primitive version of this paper is concerned with weak proregularity. Finally, he would like to thank the referee for careful reading and the valuable comments. 


(Kei Nakazato)\\
\textsc{Graduate School of Mathematics\\Nagoya University\\Nagoya 464-8602, Japan}\\
\emph{E-mail}:
\texttt{m11047c@math.nagoya-u.ac.jp}
\end{document}